\newtheorem{theorem}{Theorem}[section]
\newtheorem{proposition}[theorem]{Proposition}
\newtheorem{lemma}[theorem]{Lemma}
\newtheorem*{conjecture}{Conjecture}
\newtheorem{corollary}[theorem]{Corollary}
\theoremstyle{remark}
\newtheorem{remark}{Remark}[section]
\newtheorem{example}{Example}[section]
\newtheorem{counterexample}{Counterexample}[section]
\theoremstyle{definition}
\newtheorem{definition}{Definition}[section]
\DeclareMathOperator{\Aff}{Aff}
\DeclareMathOperator{\Dic}{Dic}
\renewenvironment{abstract}
 {\par\noindent\textbf{\abstractname.}\ \ignorespaces}
 {\par\medskip}
\newenvironment{acknowledgments}{%
  \renewcommand{\abstractname}{Acknowledgments}% Rename Abstract to Acknowledgements
  \begin{abstract}
}{%
  \end{abstract}
}
\title{Almost and quasi-Leinster groups}
\author{Iulia - C\u at\u alina Ple\c sca, Marius T\u arn\u auceanu}
\begin{document}

\thispagestyle{plain}
\maketitle
\begin{center}{\bf Iulia - C\u at\u alina Ple\c sca, Marius T\u arn\u auceanu}\end{center}\vspace{3mm}

\begin{abstract}
In this paper, we study the parallelism between perfect numbers and Leinster groups and continue it by introducing the new concepts of almost and quasi-Leinster groups which parallel almost and quasi-perfect numbers. These are small deviations from perfect numbers; very few results and/or examples are known about them.

We investigate nilpotent almost/quasi-/Leinster groups and find some examples and conditions for the existence of such groups for classes of non-nilpotent groups: ZM (Zassenhaus metacyclic) groups, affine groups, dihedral groups and dicyclic groups.  
\end{abstract}

\textbf{Keywords:} Perfect numbers, Leinster groups\\
\textbf{2010 Mathematics Subject Classification:} Primary 11G32, 34M15; Secondary 34M35, 05C10, 05C65, 33C99
\section{Introduction}

Throughout this paper, all the integers are positive and all the groups are finite. We denote the set of positive integers by $\mathbb{N}^*$. By \textit{proper divisor} of an integer, we understand a divisor of the number that is different from the integer itself. Similarly, by a \textit{proper subgroup} of a group, we understand a subgroup of the group except the group itself.  
For a group $G$, we denote by $L(G)$ the lattice of its subgroups and by $N(G)$ the sublattice of normal subgroups. 

In 2001, Tom Leinster introduced in \cite{Leinster} a group theoretical analogue to perfect numbers: finite groups where the order of the group is equal to the sum of the orders of its proper normal subgroups. He called these groups perfect, but since this name was already used, these groups were later called Leinster groups \cite{Medts}. 

Finding examples of Leinster groups of odd order proved difficult, taking ten years. An example was produced by Fran\,cois Brunault in reply to Tom Leinster’s post on MathOverflow \cite{post}. The use of computational programs brought thousands of examples of Leinster groups, see \cite{Nieuwveld}. Additionally, the existence of Leinster groups among different classes of groups (e.g. generalized quaternion groups, semisimple groups) was studied, see \cite{Baishya1}. Also, Leinster groups of specific orders, products of primes, such as $p^2q^2$, $pqrs$, have been studied, \cite{Baishya1, Baishya2}.  

Our paper aims to further study Leinster groups and introduce two new related concepts: almost and quasi-Leinster groups, group-theoretical analogues to almost and quasi-perfect numbers. 

\subsection{Preliminaries}
We begin with the number theoretic concepts and then present their group-theoretic equivalents.

We start with two classical integer-valued functions (\cite{Medts}, Definition 2.1.):

\begin{equation}
D(n)=\sum_{d|n}d
\end{equation} 
and 
\begin{equation}
\delta(n) = \frac{D(n)}{n}\,,
\end{equation}where $n\in\mathbb{N}^*$.

Now, we can classify integers based on the values of $D$ and $\delta$: 
\begin{definition}(\cite{Guy}, Pages 71, 74)
\begin{enumerate}[(1)]
\item
An integer $n$ is called \textit{perfect} if it is equal to the sum of its proper divisors, i.e.:
\begin{equation}
\delta(n)=2
\end{equation}
\item
An integer is called \textit{abundant} if it is smaller than the sum of its proper divisors, i.e.:
\begin{equation}
\delta(n)>2.
\end{equation}
\item
An integer is called \textit{deficient} if it is greater than the sum of its proper divisors, i.e.:
\begin{equation}
\delta(n)<2.
\end{equation}
\item
An abundant integer is called \textit{quasi-perfect} if the sum of its proper divisors is greater than itself by one, i.e.:
\begin{equation}
D(n)=2n+1.
\end{equation}
\item
A deficient integer is called \textit{almost perfect} if the sum of its proper divisors is smaller than itself by one, i.e.:
\begin{equation}
D(n)=2n-1.
\end{equation}
\end{enumerate}
\end{definition}

\begin{example}[\cite{Guy}, Pages 71, 74]
\begin{enumerate}[(1)]
\item 
$6=1+2+3$ and $28=1+2+4+7+14$ are perfect numbers.
\item
The only known almost perfect numbers are of type $2^s-1$ with $s>0$. 
\item
There are no known quasi-perfect numbers.
\end{enumerate}
\end{example}
\begin{proposition}[\cite{Guy}, Page 71]\label{prop:perfect}
All even perfect numbers are of the form $f(r)=2^{r-1}(2^r-1)$, where $r\geq 2$ and $2^r-1$ is prime.
\end{proposition}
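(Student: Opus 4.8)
The plan is to prove the Euler direction of the classical Euclid--Euler characterisation: assuming $n$ is even and perfect, derive the stated form. (The converse --- that every such $f(r)$ is perfect --- is a short computation with $D$ and is not what is asserted here.) Since $n$ is even, write $n = 2^{r-1}m$ with $r \geq 2$ and $m$ odd. The first step is to use that $D$ is multiplicative on the coprime factors $2^{r-1}$ and $m$, together with the geometric-series evaluation $D(2^{r-1}) = 2^r - 1$, to get
\[
D(n) = (2^r - 1)\,D(m).
\]

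Next I would feed in the perfectness hypothesis $\delta(n) = 2$, i.e. $D(n) = 2n = 2^r m$, which yields $(2^r - 1)D(m) = 2^r m$. Since $\gcd(2^r - 1, 2^r) = 1$, this forces $2^r - 1 \mid m$; writing $m = (2^r - 1)k$, substituting back and cancelling $2^r - 1$ gives $D(m) = 2^r k = m + k$. The key final step is then a divisor-counting argument: because $m = (2^r-1)k$ with $2^r - 1 \geq 3$, both $k$ and $m$ are divisors of $m$ with $k < m$, and since $D(m)$ is the sum of \emph{all} divisors of $m$ yet already equals $k + m$, these must be the only two divisors of $m$. Hence $k = 1$, so $m = 2^r - 1$ is prime and $n = 2^{r-1}(2^r - 1)$, as claimed.

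There is no deep obstacle here; the argument is elementary. The one point needing care is the last step: one must check that $k \neq 1$ would exhibit a third divisor of $m$ (namely $1$, distinct from both $k$ and $m$) and thus force $D(m) > k + m$, a contradiction, while $k = 1$ together with $D(m) = m + 1$ is precisely the statement that $m$ is prime. It is also worth noting the harmless boundary facts that $r \geq 2$ is automatic from $n$ being even and that $2^r - 1 > 1$ keeps the inequality $k < m$ strict.
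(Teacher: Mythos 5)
Your proof is correct and complete: it is the classical Euler argument for the hard direction of the Euclid--Euler theorem, and every step (multiplicativity of $D$, the divisibility $2^r-1\mid m$, and the divisor-counting finish showing $k=1$ and $m$ prime) is sound. The paper itself gives no proof of this proposition --- it is quoted from Guy's book --- so there is nothing to compare against; your argument is the standard one found in the cited source.
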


\begin{example}[\cite{Mersenne}]
There are 52 known perfect even numbers $P_i$, $i\in\{1,\dots, 52\}$. Using notation in Proposition \ref{prop:perfect}, the first seven are: 
$P_1=6=f(2)$, $P_2=28=f(3)$, $P_3=496=f(5)$, $P_4=8128=f(6)$, $P_5=33550336=f(13)$, $P_6=8589869056=f(17)$ and $P_7=137438691328=f(19)$.
\end{example}
Using Proposition \ref{prop:perfect}, we prove a small corollary that will be useful in our study.

\begin{corollary}\label{k=1}
Let $p$ be a prime, $P_i$ an even perfect number and $k$ a positive integer such that:
\begin{equation}\label{eq:perfect}
P_i+1=p^k.
\end{equation} 
Then $k=1$.
\end{corollary}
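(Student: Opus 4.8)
The plan is to feed the classification in Proposition \ref{prop:perfect} into a short chain of congruence and $2$-adic valuation arguments. Write $P_i = 2^{r-1}(2^r-1)$ with $q := 2^r-1$ prime and $r \geq 2$, and suppose $P_i + 1 = p^k$; I want to force $k=1$. Since $P_i$ is even, $p$ is odd. The case $r = 2$ is immediate: there $P_i + 1 = 7$ is prime, so $k=1$. For $r \geq 3$ I would first record that $r$ must be odd, because if $r$ were even then $3 \mid 2^r - 1$ while $2^r - 1 > 3$, contradicting primality of $q$.

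The crucial step is a reduction modulo $3$. For $r$ odd, $2^{r-1} \equiv 1 \pmod 3$ and $2^r - 1 \equiv -2 \equiv 1 \pmod 3$, hence $P_i \equiv 1$ and therefore $p^k \equiv 2 \pmod 3$. Since the squares mod $3$ are $0$ and $1$, this simultaneously forces $p \equiv 2 \pmod 3$ and, more importantly, $k$ odd. I regard this observation as the heart of the argument: the genuinely awkward possibility is $k$ even, and the congruence eliminates it outright, leaving only a routine odd-$k$ analysis.

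For the odd-$k$ case, assume for contradiction $k \geq 3$. Factor $p^k - 1 = (p-1)\bigl(1 + p + \cdots + p^{k-1}\bigr)$; the second factor is a sum of $k$ odd terms with $k$ odd, hence it is odd. Since $p^k - 1 = 2^{r-1}q$ with $q$ odd, comparing $2$-adic valuations gives $v_2(p-1) = r-1$, so $p - 1 = 2^{r-1}m$ with $m$ odd. Cancelling $2^{r-1}$ leaves $m\bigl(1 + p + \cdots + p^{k-1}\bigr) = q$; as $q$ is prime and the geometric-type sum exceeds $1$, we must have $m = 1$, i.e. $p = 2^{r-1}+1$ and $1 + p + \cdots + p^{k-1} = 2^r - 1$. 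But $k \geq 3$ forces $1 + p + \cdots + p^{k-1} > p^2 = (2^{r-1}+1)^2 > 2^{2r-2} \geq 2^r > 2^r - 1$, a contradiction. Hence $k = 1$.

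The only real obstacle is ruling out even exponents $k$, and I expect the mod-$3$ trick above to be the cleanest way to do so, after which the remaining bookkeeping is elementary. An alternative route would treat $k = 2\ell$ directly using the nearly coprime factorization $p^k - 1 = (p^\ell - 1)(p^\ell + 1)$ together with Zsygmondy's theorem to control the odd part; this works but is noticeably more laborious, so I would not pursue it.
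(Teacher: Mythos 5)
Your proof is correct, and it diverges from the paper's at the step you yourself identify as the crux: eliminating even exponents $k$. The paper handles that case by writing $p^{2\ell}-1=(p^\ell-1)(p^\ell+1)$ as a product of consecutive even numbers and deriving a contradiction from how the factors $2^{r-1}$ and the Mersenne prime $2^r-1$ must distribute between them; your mod-$3$ argument (for $r\geq 3$ odd, $P_i\equiv 1$ so $p^k\equiv 2\pmod 3$, which is not a square residue) reaches the same conclusion with less case analysis and, as a bonus, sidesteps the delicate bookkeeping of which factor absorbs the Mersenne prime --- though it costs you the separate (trivial) treatment of $r=2$, which the paper's factorization approach does not need. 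For odd $k>1$ the two arguments are essentially the same --- factor $p^k-1=(p-1)(1+p+\cdots+p^{k-1})$, use parity to force the odd geometric sum to equal $2^r-1$ and $p-1$ to equal $2^{r-1}$ --- but your version is tighter: you justify $p-1=2^{r-1}m$ via the $2$-adic valuation and close with a clean size estimate $1+p+\cdots+p^{k-1}>p^2>2^r-1$, whereas the paper ends by asserting that $2p-3=p^{k-1}+\cdots+p+1$ fails ``since $p$ is odd,'' which is not the right reason (both sides are odd); the actual obstruction is the magnitude comparison you make explicit. In short: same skeleton for the odd case, a genuinely different and cleaner device for the even case.
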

\begin{proof}
Since $P_i$ is an even perfect number, using \ref{prop:perfect}, equation \eqref{eq:perfect} becomes: 
\begin{equation}\label{eq:perfect:prime_power}
p^k-1=2^{r-1}(2^r-1),
\end{equation} where $2^r-1$ is a Mersenne prime $r\geq 2$. \\
We assume, by contradiction, that $k$ is even, i.e. $k=2\ell, \ell\in\mathbb{N}^*$. Equation \eqref{eq:perfect:prime_power} becomes:
$$(p^\ell-1)(p^\ell+1)=2^{r-1}(2^r-1).$$
Numbers $p^\ell-1$ and $p^\ell+1$ are even consecutive numbers. It follows that $r-1\geq 2\iff r\geq 3.$\\
Since $2^r-1$ is prime, it divides only one of the factors in the left-hand side (either $p^\ell-1$, or $p^\ell+1$). Since they are both even, it follows that $2^{r+1}-2$ is a divisor of one of the factors. 
Because $2^{r+1}-2>2^{r-2}$, and $p^\ell+1>p^\ell-1$, we get $p^\ell-1=2^{r-2}$ and $p^\ell+1=2^{r+1}-2$. Subtracting these, it follows that $2^{r+1}-4=2^{r-2}$, which yields a contradiction, hence $k$ is odd.\\
Now, we prove that $k$ can actually be just $1$.\\
Assume by contradiction, that $k>1$. In this case, equation \eqref{eq:perfect:prime_power} becomes:
$$2^{r-1}(2^r-1)=(p-1)(p^{k-1}+\dots+p+1)$$
Since $p^k$ is odd, it follows that $p$ is odd. Together with the fact that $k$ is odd, we get that $p^{k-1}+\dots+p+1$ is odd. Since $p-1$ is odd, it follows that $2^{r-1}|(p-1)$. Also, because $2^r-1$ is prime, it divides just one of the factors in the right-hand side. Both the factors in the right hand side are greater than $1$; it follows that $2^{r-1}=p-1$ and $2^r-1=p^{k-1}+\dots+p+1$. We get that:
$2p-3=p^{k-1}+\dots+p+1$, which is equivalent to $p^{k-1}+\dots+p-2p=-4$. Since $p$ is odd, this has no solutions.\\
This concludes the proof.
\end{proof}
\begin{example}\label{perfect+1}
Using GAP \cite{GAP}, we found $4$ solutions for \eqref{eq:perfect}: 
\begin{equation}\label{Mersenne prime}
P_i\in \{P_1, P_2, P_5, P_7\}.
\end{equation}
They were obtained by checking the first $39$ even perfect numbers.
\end{example}
While even perfect numbers are characterized, no odd perfect numbers are known:
\begin{conjecture}[\cite{Guy}, Page 71]
There are no odd perfect numbers.
\end{conjecture}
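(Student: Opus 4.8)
This is the classical Odd Perfect Number problem, open since antiquity; consequently I can only propose a line of attack and must be candid that no complete argument is known. The natural framework is the multiplicativity of $D$: if $n=\prod_i p_i^{a_i}$ then
\begin{equation*}
\delta(n)=\prod_i \frac{p_i^{a_i+1}-1}{p_i^{a_i}(p_i-1)},
\end{equation*}
so that $n$ is perfect exactly when $\delta(n)=2$. The plan is to assume, for contradiction, that $n$ is an odd integer with $\delta(n)=2$ and to squeeze its prime factorisation through a sequence of increasingly restrictive necessary conditions until they become jointly impossible.

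The first step recovers Euler's structural description. Writing $\delta(n)=2$ as $D(n)=2n$ and using that $n$ is odd forces $2n\equiv 2\pmod 4$, so $D(n)$ is divisible by $2$ but not by $4$. For an odd prime power $p^a$ one has $D(p^a)=1+p+\dots+p^a\equiv a+1\pmod 2$, which is odd precisely when $a$ is even. Comparing $2$-adic valuations across the multiplicative factorisation $D(n)=\prod_i D(p_i^{a_i})$, the identity $v_2(D(n))=1$ forces all but one exponent $a_i$ to be even and pins down the remaining \enquote{special} prime $p$ and its exponent $\alpha$ modulo $4$, yielding $n=p^{\alpha}m^{2}$ with $p\equiv\alpha\equiv 1\pmod 4$, $p$ odd and $\gcd(p,m)=1$. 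This \emph{Eulerian form} is the basic object that every subsequent estimate refines.

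The second step is to layer on quantitative constraints. Controlling $\delta$ as a product of local factors bounded by $\tfrac{p}{p-1}$ lets one bound the number $\omega(n)$ of distinct prime divisors from below: if $n$ had too few small primes, then $\prod_{p\mid n}\tfrac{p}{p-1}$ could not reach $2$, so $n$ must contain enough small primes, while simultaneously the Eulerian form restricts how those primes and exponents interact modulo $4$. Iterating such divisibility, congruence, and density arguments one aims to force lower bounds on $\omega(n)$, on the total number of prime factors counted with multiplicity, on the size of $n$, and on its largest prime factor, and then to show that these cannot all hold at once.

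The main obstacle is exactly that this final collision never materialises: each individual constraint is attainable, and no finite accumulation of them has ever been driven to an outright contradiction, which is why the statement stands as a conjecture rather than a theorem. I would therefore expect the decisive difficulty to lie not in any single lemma above but in closing the gap between \enquote{no small odd perfect number exists} and \enquote{no odd perfect number exists at all}; absent a genuinely new idea bounding $n$ from above, the argument cannot be completed.
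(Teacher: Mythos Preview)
The paper states this as a \emph{conjecture}, not a theorem, and accordingly gives no proof; there is nothing to compare your attempt against. Your proposal is appropriately honest about this: you correctly recover Euler's form $n=p^{\alpha}m^{2}$ with $p\equiv\alpha\equiv 1\pmod 4$ from the $2$-adic valuation of $D(n)$, and you accurately describe the standard programme of stacking congruence and density constraints on $\omega(n)$, $\Omega(n)$, the size of $n$, and its largest prime factor. Your assessment that the decisive gap is the absence of any upper bound on $n$ is exactly the state of the art. So your write-up is a fair summary of what is known, but it is not a proof, and the paper does not claim one either.
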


Now, we can present the analogues concepts for groups. We start with the definitions of the functions $D$ and $\delta$.

Most of our notation is standard and will usually not be repeated here. Elementary notions and results on groups can be found in \cite{Huppert} and \cite{Isaacs}.
\begin{definition}[\cite{Medts}, Definition 2.1.]\label{def:functions}
Given a group $G$, we define the following: 
\begin{equation}
D(G)=\sum_{N\triangleleft G}|N|
\end{equation} 
and 
\begin{equation}
\delta(G) = \frac{D(G)}{|G|}.
\end{equation}
\end{definition}
The easiest computations of the functions $D$ and $\delta$ are those for cyclic groups.
\begin{example}[\cite{Leinster}, Example 2.1.]\label{sigma:cyclic}
For a cyclic finite group $C_n$, the following hold:
\begin{equation}\label{D}
D(C_n)=D(n)
\end{equation} 
and
\begin{equation}\label{delta}
\delta(C_n)=\delta(n).
\end{equation} 
\end{example}
We recall some basic properties of the two functions given in Definition \ref{def:functions}.
\begin{remark}[\cite{Medts}, Observation 3.1.]
The following hold:
\begin{enumerate}[(1)]
\item
$\delta(G)>1$, for all $G\neq \{1\}$;
\item
$\delta(G/N)\leq \delta(G),$ for all $N \lhd G$;
\item
$D$, $\delta$ are multiplicative functions:
$$
\text{If }G_1, G_2 \text{ groups such that }\gcd(|G_1|,|G_2|)=1, \text{then }
$$
\begin{equation}
D(G_1\times G_2)=D(G_1)D(G_2)
\end{equation}
\begin{equation}
\delta(G_1\times G_2)=\delta(G_1)\delta(G_2)
\end{equation}
\end{enumerate}
\end{remark}
Now, we can state the definitions for Leinster, abundant and deficient groups. The first ones have been studied in \cite{Leinster,Medts,Baishya1,Baishya2,Nieuwveld}. \begin{definition}[\cite{Medts}, Definition 2.1.]\label{def:Leinster}
Let $G$ be a group.
\begin{enumerate}[(1)]
\item
$G$ is called \textit{Leinster} if $\delta(G)=2$.
\item
$G$ is called \textit{abundant} if $\delta(G)>2$.
\item
$G$ is called \textit{deficient} if $\delta(G)<2$.
\end{enumerate}
\end{definition}
We can extend the analogy to integers by introducing quasi-Leinster and almost Leinster groups.
\begin{definition}
Let $G$ be a group.
\begin{enumerate}[(1)]
\item
$G$ is called \textit{quasi-Leinster} if 
\begin{equation}
D(G)=2|G|+1.
\end{equation}
\item
$G$ is called \textit{almost Leinster} if 
\begin{equation}
D(G)=2|G|-1.
\end{equation}
\end{enumerate}
\end{definition}

\begin{counterexample}
The non-abelian group $G_{p,q}$ of order $pq$, $p<q$ primes such that $p|q-1$, is not almost/quasi-Leinster.
To see this, we compute the value of $D(G_{p,q})$. From Sylow's theorems, it follows that $G_{p,q}$ has as proper normal subgroups the trivial one and the Sylow one of order $q$. Therefore 
$$D(G_{p,q})=1+q+pq.$$Since $|G_{pq}|=pq$, we would have to have:
$$2pq\pm 1= 1+q+pq \iff pq\pm 1=1+q.$$Since $p\geq 2$ and $q\geq 3$, it is easy to see that these equations have no solutions.
\end{counterexample}

We proceed to catalog the almost/quasi-/Leinster groups in different classes of groups.
\section{Nilpotent groups}
The first result allows us to see what the almost Leinster groups are.
\begin{proposition}[\cite{Medts}]\label{prop:almost}
Let $G$ be a nilpotent group. The following are equivalent:
\begin{enumerate}[(1)]
\item
$\delta(G)\leq 2$
\item 
The group $G$ is cyclic and $|G|$ is perfect or deficient.
\end{enumerate}
\end{proposition}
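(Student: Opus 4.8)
The plan is to establish both implications by exploiting the multiplicativity of $\delta$ together with the structure theorem for finite nilpotent groups, namely that a finite nilpotent group $G$ decomposes as the direct product of its Sylow subgroups $G = P_1 \times \cdots \times P_t$, whose orders are pairwise coprime. Since $\delta$ is multiplicative, $\delta(G) = \prod_{i=1}^t \delta(P_i)$, and likewise $\delta(G) = 2$ (the Leinster threshold) plays the role of the perfect-number value $\delta(n) = 2$. The direction $(2) \Rightarrow (1)$ is the easy one: if $G$ is cyclic then by Example \ref{sigma:cyclic} we have $\delta(G) = \delta(|G|)$, and if $|G|$ is perfect or deficient then $\delta(|G|) \leq 2$ by the definitions in Section~2, so $\delta(G) \leq 2$ immediately.

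For the harder direction $(1) \Rightarrow (2)$, I would first reduce to the case of a $p$-group. Suppose $\delta(G) \leq 2$. Because each factor satisfies $\delta(P_i) > 1$ (every nontrivial group has $\delta > 1$ by the recalled remark), the product $\prod \delta(P_i) \leq 2$ forces strong constraints; in particular at most one factor can have $\delta(P_i) \geq \tfrac32$ or so, and more usefully, I claim each $P_i$ must itself be cyclic. The key sublemma is: \emph{if $P$ is a non-cyclic $p$-group, then $\delta(P) \geq 1 + \tfrac1p + \tfrac1{p}\cdot(\text{something})$ is already large enough that no product of such factors with the other $\delta(P_j) > 1$ terms can stay $\leq 2$.} Concretely, a non-cyclic $p$-group has at least two distinct subgroups of order $p$ (indeed at least $p+1$ of them, all normal if they are central, and in general one can find enough normal subgroups), so $D(P) \geq 1 + (p+1)p + |P| \cdot(\text{lower-order terms})$, giving $\delta(P) \geq 1 + \tfrac1{|P|}\big(1 + (p+1)p\big)$; combined with a counting of normal subgroups up the chief series one shows $\delta(P) > 2$ outright when $P$ is non-cyclic, or at least $\delta(P)$ exceeds $2 / \prod_{j \neq i}\delta(P_j)$. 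Once every Sylow subgroup is cyclic, $G$ itself is cyclic, and then $\delta(G) = \delta(|G|) \leq 2$ says exactly that $|G|$ is perfect or deficient.

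The main obstacle I anticipate is making the sublemma on non-cyclic $p$-groups fully rigorous — specifically, getting a clean lower bound on $\delta(P)$ for an arbitrary non-cyclic $p$-group $P$ that is uniformly bigger than $2$ (or bigger than the slack left by the other factors). The cyclic-of-order-$p^2$ versus elementary-abelian-of-order-$p^2$ comparison is the base case: $\delta(C_{p^2}) = 1 + \tfrac1p + \tfrac1{p^2}$ while $\delta(C_p \times C_p) = 1 + \tfrac{p+1}{p} + \tfrac1{p^2} = 2 + \tfrac1{p^2} > 2$, which already shows the smallest non-cyclic $p$-group is abundant. For larger non-cyclic $P$ one wants monotonicity: passing to a quotient cannot increase $\delta$ (the recalled property $\delta(G/N) \leq \delta(G)$), and every non-cyclic $p$-group has $C_p \times C_p$ as a quotient (it surjects onto its Frattini quotient, which has rank $\geq 2$), so $\delta(P) \geq \delta(C_p \times C_p) > 2$. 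That quotient argument is the crisp way to close the gap, and it also immediately handles the mixed case: if some Sylow factor is non-cyclic then $\delta(G) \geq \delta(P_i) > 2$ already, contradicting $\delta(G) \leq 2$. So the proof skeleton is: (i) quote multiplicativity and the nilpotent decomposition; (ii) quote $\delta(G/N) \leq \delta(G)$ and the rank-$\geq 2$ Frattini-quotient fact to get $\delta \geq \delta(C_p\times C_p) > 2$ for non-cyclic $p$-groups; (iii) conclude every Sylow subgroup, hence $G$, is cyclic; (iv) finish with Example \ref{sigma:cyclic} and the numeric definitions.
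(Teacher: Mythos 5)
Your proof is correct, and for the substantive half of the statement it takes a genuinely different route from the paper. The paper's own argument for $(1)\Rightarrow(2)$ only establishes that $|G|$ is perfect or deficient, and it does so without touching cyclicity: it uses the fact that a nilpotent group of order $n$ has a normal subgroup of every order dividing $n$, hence $D(G)\geq D(n)$, hence $\delta(G)\leq 2$ forces $D(n)\leq 2n$. For the cyclicity of $G$ the paper simply cites Proposition 3.2 of De Medts--Mar\'oti and proves nothing. You instead prove cyclicity from scratch: decompose $G$ into Sylow subgroups, use multiplicativity of $\delta$, and show any non-cyclic $p$-group $P$ has $\delta(P)>2$ by combining the Burnside basis theorem (a non-cyclic $p$-group surjects onto $C_p\times C_p$) with the recalled monotonicity $\delta(G/N)\leq\delta(G)$ and the computation $\delta(C_p\times C_p)>2$. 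This is essentially the argument the paper reuses later in its quasi-Leinster proposition (there via an explicit lower bound on $D(G_1)$ for non-cyclic $G_1$ rather than via the Frattini quotient), so your version is both self-contained and consistent with the paper's toolkit; what the paper's $D(G)\geq D(n)$ trick buys is a one-line proof of the ``perfect or deficient'' half without needing cyclicity first. One small slip: $\delta(C_p\times C_p)=1+\frac{p+1}{p}+\frac{1}{p^2}=2+\frac{1}{p}+\frac{1}{p^2}$, not $2+\frac{1}{p^2}$; the conclusion $\delta(C_p\times C_p)>2$ is unaffected.
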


\begin{proof}
The direct ((1)$\Rightarrow$(2)) is Proposition 3.2 from (\cite{Medts}). The converse follows from \eqref{D}.
\end{proof}
We can restate Proposition \ref{prop:almost}, using the new terminology.
\begin{corollary}
Let $G$ be a nilpotent group. The following are  equivalent:
\begin{enumerate}[(1)]
\item 
The group $G$ is almost/Leinster.
\item
The group $G$ is cyclic and $|G|$ is almost/perfect.
\end{enumerate}
\end{corollary}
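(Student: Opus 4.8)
The plan is to deduce this corollary directly from Proposition \ref{prop:almost}, since the corollary is essentially a restatement of that proposition in the new vocabulary of almost/quasi/Leinster groups. First I would recall that, by definition, a nilpotent group $G$ is almost Leinster, Leinster, or quasi-Leinster precisely when $D(G)=2|G|-1$, $D(G)=2|G|$, or $D(G)=2|G|+1$ respectively; in particular every almost/Leinster group is deficient or perfect, so it satisfies $\delta(G)\le 2$, while a quasi-Leinster group is abundant. I would then invoke Proposition \ref{prop:almost}: for nilpotent $G$, the condition $\delta(G)\le 2$ forces $G$ to be cyclic. Once $G=C_n$ is cyclic, Example \ref{sigma:cyclic} gives $D(G)=D(n)$ and $\delta(G)=\delta(n)$, so the group-theoretic equations $D(G)=2|G|\mp 1$ translate verbatim into the number-theoretic equations $D(n)=2n\mp 1$, i.e. $n$ is perfect respectively almost perfect (and the Leinster case $D(G)=2|G|$ becomes $n$ perfect).

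For the forward direction I would argue: if $G$ is almost Leinster (resp. Leinster), then $\delta(G)<2$ (resp. $=2$), so $\delta(G)\le 2$ and Proposition \ref{prop:almost} yields that $G$ is cyclic, say $G\cong C_n$; then $D(n)=D(G)=2n-1$ (resp. $2n$), so $n$ is almost perfect (resp. perfect). Conversely, if $G$ is cyclic of order $n$ with $n$ almost perfect (resp. perfect), then by Example \ref{sigma:cyclic}, $D(G)=D(n)=2n-1=2|G|-1$ (resp. $2|G|$), which is exactly the defining equation, and since $2n-1<2n$ the group is deficient (resp. $\delta(G)=2$), so $G$ is almost Leinster (resp. Leinster). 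This handles both statements packaged by the "almost/" notation.

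The one point that deserves care — and which I expect to be the only genuine obstacle — is the quasi-Leinster case, because a quasi-Leinster group is \emph{abundant}, i.e. $\delta(G)>2$, and Proposition \ref{prop:almost} says nothing about abundant nilpotent groups beyond the fact that they are non-cyclic (by the contrapositive: if $G$ is cyclic and nilpotent with $\delta(G)>2$, then... actually a cyclic group with $\delta(n)>2$ is abundant, so cyclicity is not excluded). Here I would note that the corollary as literally stated with "quasi" would be \emph{false}: a cyclic group of quasi-perfect order would be quasi-Leinster, but conversely there could in principle exist non-cyclic nilpotent quasi-Leinster groups, since $\delta(G)\ge D(n)/n$ can exceed $2$ without $G$ being cyclic. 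Consequently I would restrict the clean biconditional to the almost/Leinster cases (matching the "$\delta(G)\le 2$" hypothesis of Proposition \ref{prop:almost}), and treat the quasi case separately, observing only the one-directional implication that a cyclic group of quasi-perfect order is quasi-Leinster — and, since no quasi-perfect numbers are known, no nilpotent quasi-Leinster group arising this way is known either.

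Finally I would remark that, combined with the classical fact (Proposition \ref{prop:perfect}) that even perfect numbers are exactly $2^{r-1}(2^r-1)$ with $2^r-1$ prime, and with the standard belief that the only almost perfect numbers are the powers of two $2^s$ (whose proper divisor sum is $2^{s+1}-1=2\cdot 2^s-1$), this corollary already pins down the nilpotent Leinster groups as the cyclic groups $C_{2^{r-1}(2^r-1)}$ and the nilpotent almost Leinster groups as (conjecturally) the cyclic $2$-groups $C_{2^s}$ — a point worth stating explicitly as it motivates the non-nilpotent investigations in the remaining sections.
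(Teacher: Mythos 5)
Your argument is correct and is essentially the paper's own: the corollary is obtained by feeding the almost-Leinster/Leinster hypothesis ($\delta(G)\leq 2$) into Proposition \ref{prop:almost} to get cyclicity, and then using $D(C_n)=D(n)$ to translate $D(G)=2|G|-1$ (resp.\ $2|G|$) into $n$ almost perfect (resp.\ perfect); the paper states this without writing out the details you supply. One correction to your digression: the statement under review does not include the quasi case, and your assertion that the quasi version ``would be false'' is wrong --- the paper proves it as a separate proposition immediately afterwards, using multiplicativity of $\delta$ and the lower bound $\delta(P)\geq 2+\tfrac{1}{p}+\tfrac{1}{p^{n}}$ for a non-cyclic $p$-group $P$ of order $p^n$ to rule out non-cyclic nilpotent quasi-Leinster groups; what is true is only that this requires an argument beyond Proposition \ref{prop:almost}.
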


The following proposition characterizes quasi-Leinster nilpotent groups.
\begin{proposition}
Let $G$ be a nilpotent group. The following are  equivalent:
\begin{enumerate}[(1)]
\item 
The group $G$ is quasi-Leinster.
\item
The group $G$ is cyclic and $|G|$ is quasi-perfect.
\end{enumerate}
\end{proposition}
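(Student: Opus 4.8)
The plan is to mirror the proof of Proposition \ref{prop:almost}, exploiting the key fact that a nilpotent group $G$ of order $n$ has a normal subgroup of every order $m \mid n$, so that $D(G) \geq D(n)$, with equality forcing strong structural restrictions. Concretely, I would first establish the direction (1) $\Rightarrow$ (2). Suppose $G$ is quasi-Leinster, so $D(G) = 2n+1$ where $n = |G|$. By inequality \eqref{eq:DG} we get $D(n) \leq D(G) = 2n+1$; on the other hand, since $G$ is abundant we cannot have $n$ deficient, and since $D(G) = 2n+1 \neq 2n$ the group is not Leinster, so by the Corollary restating Proposition \ref{prop:almost} the order $n$ is not perfect either. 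Hence $D(n) \geq 2n+1$, which combined with $D(n) \leq 2n+1$ gives $D(n) = 2n+1$, i.e. $n$ is quasi-perfect. It then remains to show $G$ must be cyclic: from $D(n) = D(G) = 2n+1$ and $D(G) \geq D(n)$ with the contribution of $D(n)$ already coming from one normal subgroup of each divisor order, any extra normal subgroup would push $D(G)$ strictly above $D(n)$. So for each $m \mid n$ there is exactly one normal subgroup of order $m$; in particular, for each prime power $p^a \| n$ there is a unique subgroup of that order, which (together with nilpotency, so $G$ is the direct product of its Sylow subgroups) forces each Sylow subgroup to be cyclic, hence $G$ cyclic.

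For the converse (2) $\Rightarrow$ (1): if $G$ is cyclic of quasi-perfect order $n$, then by Example \ref{sigma:cyclic} we have $D(G) = D(C_n) = D(n) = 2n+1$, and since $n$ quasi-perfect means $\delta(n) > 2$ we get $\delta(G) = \delta(n) > 2$, so $G$ is abundant with $D(G) = 2|G|+1$, i.e. quasi-Leinster. This direction is essentially immediate.

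The main obstacle is the rigidity argument in the forward direction: carefully justifying that $D(G) = D(n)$ forces a unique normal subgroup of each order $m \mid n$ and hence cyclicity. One has to be slightly careful because \eqref{eq:DG} only records a subset of normal subgroups (one per divisor); I would phrase it as: list, for each $m \mid n$, one chosen normal subgroup $N_m$ of order $m$ (these are distinct since they have distinct orders), so $D(G) \geq \sum_{m \mid n}|N_m| = D(n)$, and if $G$ had any normal subgroup $K$ not among the $N_m$ then $D(G) \geq D(n) + |K| > 2n+1$ unless... — actually one must be slightly more careful here since $|K|$ could coincide with some $|N_m|$ while $K \neq N_m$, but the inequality $D(G) \geq D(n) + |K|$ still holds because $K$ is a genuinely new term in the sum $\sum_{N \triangleleft G}|N|$. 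Then $|K| \geq 1$, and since $D(G) = 2n+1 = D(n)$ exactly, we conclude there is no such $K$, so every $m \mid n$ is realized by a unique normal subgroup. Appealing to the structure of nilpotent groups as the direct product of Sylow subgroups, each Sylow $p$-subgroup $P$ then has a unique subgroup of order $p$ (being characteristic in $P$, it is normal in $G$), which by a standard classification (a $p$-group with a unique subgroup of order $p$ is cyclic, or generalized quaternion for $p=2$ — and the generalized quaternion case is excluded because it also has a unique subgroup of order $4$ yet more than one of order $8$ when $|P| \geq 16$, while for $|P| = 8$ the quaternion group $Q_8$ has three subgroups of order $4$, contradicting uniqueness) forces $P$ cyclic, hence $G$ cyclic.
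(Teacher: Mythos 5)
There is a genuine gap in your forward direction, at the very first step. From $\delta(G)>2$, Proposition \ref{prop:almost} only lets you conclude the negation of \enquote{$G$ is cyclic and $|G|$ is perfect or deficient}, i.e.\ that $G$ is non-cyclic \emph{or} $n=|G|$ is abundant; it does not let you conclude that $n$ is not deficient. Indeed $C_2\times C_2$ is an abundant nilpotent group ($D(C_2\times C_2)=1+2+2+2+4=11>8$) whose order $4$ is deficient. The same problem affects your claim that $n$ is not perfect: \enquote{$G$ not Leinster} only gives \enquote{$G$ non-cyclic or $n$ not perfect}. So the inequality $D(n)\geq 2n+1$ is unjustified, and without it your rigidity argument (which needs $D(G)=D(n)$) never gets off the ground: you have not excluded a non-cyclic nilpotent $G$ of deficient or perfect order whose \emph{extra} normal subgroups happen to bring $D(G)$ up to exactly $2n+1$. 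The correct order of business is the reverse of yours: first prove cyclicity, then deduce $D(n)=D(G)=2n+1$.

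There is also a secondary flaw inside the rigidity step itself: uniqueness of the \emph{normal} subgroup of each order does not give a unique subgroup of order $p$ inside a Sylow $p$-subgroup $P$, because subgroups of $P$ of order $p$ need not be normal in $G$ (the dihedral group of order $8$ has a unique normal subgroup of order $2$ but five subgroups of order $2$), so the cyclic-or-generalized-quaternion classification is not applicable as you invoke it. Both gaps are repaired by the paper's actual argument: if some Sylow factor $G_1$ of order $p_1^{n_1}$ is non-cyclic, it has at least $p_1+1$ maximal subgroups, all normal, so that $\delta(G_1)\geq 2+\frac{1}{p_1}+\frac{1}{p_1^{n_1}}$; multiplicativity of $\delta$ over the Sylow decomposition then forces $\delta(G)>2+\frac{1}{|G|}$, contradicting $D(G)=2|G|+1$. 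This yields cyclicity directly, after which $D(n)=D(G)=2n+1$ is immediate. (Your converse direction is fine and matches the paper.)
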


\begin{proof}
The converse statement follows from \eqref{D}. We prove the direct.

Since $G$ is nilpotent, it can be written as the direct product of its Sylow $p$-subgroups:
\[G=G_1\times G_2\times \dots\times G_k \]
where $|G_i|=p_i^{n_i},$ for all $i\in\{1,2,\dots,k\}$. Since the function $D$ is multiplicative, it follows that:
\[ D(G)=\prod_{i=1}^k D(G_i).\]

Thus, we get \[\prod_{i=1}^k D(G_i)=2\prod_{i=1}^k |G_i|+1.\]

We assume, by contradiction, that $G_1$ is not cyclic. It follows that $n_1\geq 2$. In addition, the final theorem from \cite{Miler} states that the number of subgroups of index $p_1$ is congruent to $1+p_1\pmod{p_1^2}$; in particular, it is greater or equal to $1+p_1$. These subgroups are normal, so $D(G)\geq 1+p_1^{\alpha_1}+(p_1+1)p_1^{\alpha_1-1}.$ It follows that:
$$\delta(G_1)\geq \frac{1+p_1^{n_1}+(p_1+1)p_1^{n_1-1}}{p_1^{n_1}}=2+\frac{1}{p_1}+\frac{1}{p_1^{n_1}}\,.$$
Since $G$ is quasi-Leinster and $\delta$ is multiplicative, it follows that 
$$\begin{aligned}
2+\frac{1}{p_1^{n_1}\dots p_k^{n_k}}=\delta(G)&= \delta(G_1)\dots\delta(G_k)\\&\geq \left(2+\frac{1}{p_1}+\frac{1}{p_1^{n_1}}\right)\left(1+\frac{1}{p_2^{n_2}}\right)\dots\left(1+\frac{1}{p_k^{n_k}}\right)\\
&>2+\frac{1}{p_1^{n_1}\dots p_k^{n_k}}\,,
\end{aligned}$$a contradiction. Thus $G_1$ is cyclic. Analogously, all $G_i$, $i=2,\dots,k$, are cyclic and therefore $G$ is cyclic.
\end{proof}

Now, we study different classes of finite non-nilpotent groups.

\section{ZM-groups}

\begin{definition}[\cite{ZM}, Pages 144, 145]
A \textit{ZM-group} (Zassenhaus metacyclic) is a finite group with all Sylow subgroups cyclic.
\end{definition}

We recall some properties from literature for these groups.

\begin{proposition}[\cite{Huppert}]\label{prop:ZM}
The presentation of ZM-groups is given by:
\begin{equation}\label{eq:ZM}
ZM(m,n,r)=\langle a, b\mid a^m=b^n=1, b^{-1}ab=a^r\rangle,
\end{equation}
where the triplet $(m,n,r)\in(\mathbb{N}^*)^3$ satisfies the following conditions:
\begin{equation}\label{eq:mnr}
\begin{cases}
\gcd(m,n)=\gcd(m,r-1)=1,\\
r^n\equiv 1 \pmod m
\end{cases}
\end{equation}
\end{proposition}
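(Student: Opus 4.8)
The statement is the classical H\"older--Burnside--Zassenhaus description of finite groups whose Sylow subgroups are all cyclic (cf.\ \cite{Huppert}), and I would establish it as a biconditional. For the direction that every group with the presentation \eqref{eq:ZM}--\eqref{eq:mnr} is a ZM-group, set $A:=\langle a\rangle$; since $b^{-1}ab=a^r\in A$ we have $A\trianglelefteq G$, and comparing $G$, via von Dyck's theorem, with the explicit semidirect product $C_m\rtimes C_n$ in which a generator of $C_n$ acts by $x\mapsto x^r$ (well defined because $r^n\equiv 1\pmod m$) gives $|A|=m$, $|G/A|=n$, hence $|G|=mn$. As $\gcd(m,n)=1$, $A\cong C_m$ is a normal Hall subgroup with cyclic complement $\langle b\rangle\cong C_n$. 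If $p\mid m$ then a Sylow $p$-subgroup of $G$ lies inside the cyclic group $A$ and so is cyclic; if $q\mid n$ then a Sylow $q$-subgroup meets $A$ trivially and embeds into $G/A\cong C_n$, so is cyclic. Thus all Sylow subgroups of $G$ are cyclic.

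For the converse I would induct on $|G|$. First, $G$ is soluble: if $p$ is the smallest prime dividing $|G|$ and $P\in\Syl_p(G)$, then $P$ is cyclic, hence abelian, so $P\le C_G(P)$ and $p\nmid|N_G(P)/C_G(P)|$; but $N_G(P)/C_G(P)$ embeds into $\operatorname{Aut}(P)$, whose order divides $p^{a-1}(p-1)$, whereas every prime divisor of $|G|$ is $\ge p$, which forces $N_G(P)=C_G(P)$. Burnside's normal $p$-complement theorem then gives a normal $p$-complement $K$; $K$ is again a ZM-group, hence soluble by induction, and $G/K$ is a $p$-group, so $G$ is soluble. Next, $G/G'$ is abelian and each of its Sylow subgroups is a quotient of a cyclic Sylow subgroup of $G$, hence cyclic; an abelian group all of whose Sylow subgroups are cyclic is cyclic, so $G/G'$ is cyclic.

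The crux --- and the step I expect to be the main obstacle --- is to show that $G$ is actually metacyclic, i.e.\ that $G'$ is cyclic and $\gcd(|G'|,|G:G'|)=1$; this is where the induction does the real work. The plan is to pick a minimal normal subgroup $M\trianglelefteq G$, note that $M$ is elementary abelian and sits inside a cyclic Sylow subgroup, so $M\cong C_p$ for a prime $p$, apply the inductive hypothesis to the ZM-group $G/M$, and lift its metacyclic structure back to $G$, using that $\operatorname{Aut}(M)\cong\operatorname{Aut}(C_p)$ is cyclic to keep control of the extension (the case $M\le G'$, where cyclicity of $G'$ is not automatic, is the delicate one and is handled by a careful choice of $M$); alternatively one may simply invoke the structure theorem as in \cite{Huppert}. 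Granting that $G$ is metacyclic with $G'=\langle a\rangle\cong C_m$ cyclic, $G/G'$ cyclic of order $n$, and $\gcd(m,n)=1$, Schur--Zassenhaus furnishes a cyclic complement $\langle b\rangle\cong C_n$ to $G'$. Conjugation leaves $G'$ invariant, so $b^{-1}ab=a^r$ for some integer $r$; from $b^n=1$ we get $a=b^{-n}ab^n=a^{r^n}$, i.e.\ $r^n\equiv1\pmod m$; and since the normal closure of $[a,b]=a^{r-1}$ in $G$ is the $b$-invariant subgroup $\langle a^{r-1}\rangle$, which must coincide with $G'=\langle a\rangle$, we get $\langle a^{r-1}\rangle=\langle a\rangle$, that is $\gcd(m,r-1)=1$. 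Hence $G\cong\langle a,b\mid a^m=b^n=1,\ b^{-1}ab=a^r\rangle$ with $(m,n,r)$ satisfying \eqref{eq:mnr}, as required.
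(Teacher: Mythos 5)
The paper offers no proof of this proposition at all: it is quoted verbatim from the literature (Huppert), so there is nothing internal to compare your argument against. Judged on its own terms, your easy direction is complete and correct: the conditions \eqref{eq:mnr} do guarantee that $x\mapsto x^r$ is an automorphism of $C_m$ of order dividing $n$ (since $r^n\equiv 1\pmod m$ forces $\gcd(r,m)=1$), von Dyck plus the obvious normal form $b^ia^j$ pins down $|G|=mn$, and the Hall decomposition shows all Sylow subgroups are cyclic. The closing computation in the converse is also right: once one has $G'=\langle a\rangle\cong C_m$ cyclic with cyclic complement $\langle b\rangle$, the identities $r^n\equiv 1\pmod m$ and $\gcd(m,r-1)=1$ fall out exactly as you say, the latter because $G'$ is the normal closure of $[a,b]=a^{r-1}$ and $\langle a^{r-1}\rangle$ is already $b$-invariant.

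The genuine gap is the step you yourself flag as the crux: proving that $G'$ is cyclic and that $\gcd(|G'|,|G:G'|)=1$. Your solubility argument (Burnside's normal $p$-complement theorem at the smallest prime) and the cyclicity of $G/G'$ are fine, but the passage from there to metacyclicity is only a plan, not a proof: the induction on a minimal normal subgroup $M\cong C_p$ does not obviously lift the metacyclic structure of $G/M$ back to $G$ when $M\le G'$, and that is precisely the case where all the work lives (the standard treatments, e.g.\ Huppert I.14.3.1 or Robinson 10.1.10, get there by first showing $G''=1$ and then that the abelian group $G'$, having all Sylow subgroups cyclic, is cyclic and is a Hall subgroup). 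Your fallback of ``simply invoke the structure theorem as in \cite{Huppert}'' is circular, since that structure theorem \emph{is} the statement being proved --- although, to be fair, citing Huppert is exactly what the paper itself does, so as a justification of the proposition within this paper the citation suffices; as a self-contained proof, the argument is incomplete at its central step.
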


\begin{lemma}[\cite{Tarna}]\label{ZMmnorder}
The order of the group $ZM(m,n,r)$ is $mn$. 
\end{lemma}

\begin{proposition}\label{qL}
There exist quasi-Leinster ZM-groups.
\end{proposition}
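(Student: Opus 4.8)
The plan is to produce an explicit infinite family of quasi-Leinster ZM-groups directly from the mass formula \eqref{eq:sigmaZMN}, using even perfect numbers as the arithmetic fuel. Concretely, I would fix an even perfect number $n$ (e.g. $n=6$), choose a prime $p$ with $p\equiv 1\pmod n$, and choose an integer $r$ whose multiplicative order modulo $p$ equals $n$; such an $r$ exists because $(\mathbb{Z}/p\mathbb{Z})^{\times}$ is cyclic of order $p-1$ and $n\mid p-1$. The candidate group is $ZM(p,n,r)$, of order $pn$.

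First I would check that $(m,n,r)=(p,n,r)$ satisfies \eqref{eq:mnr}: since $p$ is prime and $p\ge n+1>n$ we get $\gcd(p,n)=1$; by construction $r^{n}\equiv 1\pmod p$; and since $r$ has order $n\ge 6>1$ we have $r\not\equiv 1\pmod p$, hence $\gcd(p,r-1)=1$. (The group is non-abelian, because $r\not\equiv 1\pmod p$, so a non-abelian ZM-group, hence a genuine non-nilpotent example.) Next, for a divisor $n_1\mid n$ one has $r^{n_1}\equiv 1\pmod p$ exactly when $n\mid n_1$, i.e. only for $n_1=n$; thus $\gcd(p,r^{n_1}-1)=1$ for every $n_1<n$ dividing $n$, while $\gcd(p,r^{n}-1)=p$. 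Substituting $\delta(1)=1$ and $\delta(p)=\tfrac{p+1}{p}$ into \eqref{eq:sigmaZMN} and using $\sum_{n_1\mid n}\tfrac{1}{n_1}=\tfrac{D(n)}{n}$, the sum telescopes to
\[
D(ZM(p,n,r))=p\bigl(D(n)-1\bigr)+(p+1)=p\,D(n)+1 .
\]
Since $n$ is perfect, $D(n)=2n$, so $D(ZM(p,n,r))=2pn+1=2\,|ZM(p,n,r)|+1$, whence $\delta(ZM(p,n,r))=2+\tfrac{1}{pn}>2$; the group is abundant and therefore quasi-Leinster.

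For a concrete witness I would record the smallest instance $n=6$, $p=7$, $r=3$: the group $ZM(7,6,3)=\langle a,b\mid a^{7}=b^{6}=1,\ b^{-1}ab=a^{3}\rangle$ has order $42$ and exactly five normal subgroups, of orders $1,7,14,21,42$, summing to $85=2\cdot 42+1$. The only ingredient that is not a bare substitution is the existence of the prime $p$, i.e. a prime in the residue class $1\pmod n$ --- this is precisely Dirichlet's theorem on primes in arithmetic progressions, though in the first few cases one can bypass it by taking $p=n+1$, which is prime for $n\in\{6,28,\dots\}$ by Example \ref{perfect+1}; I expect this to be the only ``obstacle'', and a mild one. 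It is worth noting why the construction is forced into this shape: solving $D(ZM(m,n,r))=2mn+1$ with all the intermediate gcd's $\gcd(m,r^{n_1}-1)$ trivial reduces, after the same telescoping, to $m\bigl(2n+1-D(n)\bigr)=D(m)-1$, and the only solution of this that does not require an (unknown) quasi-perfect or almost perfect number is ``$n$ perfect, $m$ prime''.
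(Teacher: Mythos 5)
Your proof is correct and follows essentially the same route as the paper: specialize the formula \eqref{eq:sigmaZMN} to $m$ prime with $\ord_m(r)=n$, so that $D(ZM(m,n,r))=mD(n)+1$, and then demand that $n$ be perfect. The one genuine refinement is your choice of the prime: the paper ties itself to $m=n+1$ (so only the four perfect numbers $P_i$ with $P_i+1$ prime give examples), whereas you only need $p\equiv 1\pmod n$ together with an $r$ of order exactly $n$, which by Dirichlet yields infinitely many quasi-Leinster ZM-groups for each even perfect $n$; you also verify the presentation conditions \eqref{eq:mnr}, a check the paper omits.
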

\begin{proof}
According to \cite[Theorem 5.7]{Medts}, the value of $D(ZM(m,n,r))$ is given by:
\begin{equation}\label{eq:sigmaZMN}
D(ZM(m,n,r))=mn\sum_{n_1|n}\frac{\delta(\gcd(m,r^{n_1}-1))}{n_1}.
\end{equation}
Assume that $m$ is prime, and let $d$ be the order of $r$ modulo $m$. According to \cite[Corollary 5.9.]{Medts}, equation \eqref{eq:sigmaZMN} becomes:
\begin{equation}
D(ZM(m,n,r))=mD(n)+D\left(\frac{n}{d}\right).
\end{equation}
Suppose further that $n=d$. It follows that:
\begin{equation}\label{md}
D(ZM(m,n,r))=mD(n)+1.
\end{equation} 
If we assume that $ZM(m,n,r)$ is quasi-Leinster, it follows from Lemma \ref{ZMmnorder} that: $$D(ZM(m,n,r))=2mn+1,$$ which, using \eqref{md}, implies $D(n)=2n$, i.e. $n$ is perfect.
On the other hand, it follows from \eqref{eq:mnr}, that 
\begin{equation}\label{r}
r\not\equiv 1\pmod{m}\text{ and }r\not\equiv 0\pmod{m}.
\end{equation} since $m$ is prime, it follows that the order of $r$ modulo $m$ is $m-1$. Since we have assumed that the order of $r$ is $n$, $m-1=n$, or, equivalently $m=n+1$, which has solutions, according to Example \ref{perfect+1}. Any $r\in\{2,\dots,m-1\}$ satisfies the conditions.
\end{proof}
\begin{example}
Using the assumptions in the previous proof and \ref{perfect+1}, we get $4$ known values for $n$, and the corresponding values for $m$. For each of these, all $r\in\{2,\dots,m-1\}$ give an example of quasi-Leinster ZM group. For each $m$, we write just one, for $r=3$: $(m,n,r)=(7,6,3)$, $(m,n,r)=(29,28,3)$, $(m,n,r)=(33550337, 33550336, 3)$, $(m,n,r)=(137438691329, 137438691328, 3)$.    
\end{example}

\section{Affine groups}

\begin{definition}[\cite{thesis}, Chapter 2]
Given $q=p^k$ with $p$ prime and $k\in\mathbb{N}^*$, we consider the affine group 
$$\Aff(\mathbb{F}_q)=\mathbb{F}_q\rtimes \mathbb{F}_q^*=\left\{\begin{pmatrix}
a & b\\
0 & 1
\end{pmatrix}: a\in \mathbb{F}_q^*, b\in \mathbb{F}_q\right\},$$
with matrix multiplication as the group operation.
\end{definition}

The normal subgroups of $\Aff(\mathbb{F}_q)$ are described in the following proposition.

\begin{proposition}[\cite{Conrad:affine}]\label{prop:affine}
There is a bijection between the subgroups of $\mathbb{F}_q^*$ and the non-trivial normal subgroups of $\Aff(\mathbb{F}_q)$, namely
$$H\leq \mathbb{F}_q^*\mapsto N_H=\begin{pmatrix}
H & \mathbb{F}_q\\
0 & 1
\end{pmatrix}=\left\{\begin{pmatrix}
a & b\\
0 & 1
\end{pmatrix}: a\in H, b\in \mathbb{F}_q\right\}\lhd \Aff(\mathbb{F}_q).$$
\end{proposition}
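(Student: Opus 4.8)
The plan is to produce the inverse map explicitly via the natural projection and then reduce the whole classification to understanding how a normal subgroup interacts with the translation subgroup. Let $\pi\colon\Aff(\mathbb{F}_q)\to\mathbb{F}_q^*$ be the homomorphism $\begin{pmatrix} a & b \\ 0 & 1 \end{pmatrix}\mapsto a$, whose kernel is the translation subgroup $T=\left\{\begin{pmatrix} 1 & b \\ 0 & 1 \end{pmatrix}\mid b\in\mathbb{F}_q\right\}\cong(\mathbb{F}_q,+)$. I would first observe that $N_H=\pi^{-1}(H)$; since $\mathbb{F}_q^*$ is abelian every $H\le\mathbb{F}_q^*$ is normal, and the preimage of a normal subgroup under a homomorphism is normal, so $N_H\lhd\Aff(\mathbb{F}_q)$ with no further computation. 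Injectivity of $H\mapsto N_H$ is then immediate, since $\pi$ is surjective and $\pi(N_H)=H$ recovers $H$.

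For surjectivity, let $N$ be a non-trivial normal subgroup; the crux is to show $T\subseteq N$, after which $N=\pi^{-1}(\pi(N))$ will follow. The key computation is a commutator: for $t=\begin{pmatrix} 1 & c \\ 0 & 1 \end{pmatrix}\in T$ and $n=\begin{pmatrix} a & b \\ 0 & 1 \end{pmatrix}\in N$, normality gives $tnt^{-1}\in N$ and hence $[t,n]=tnt^{-1}n^{-1}\in N$, and one finds
\[
[t,n]=\begin{pmatrix} 1 & c(1-a) \\ 0 & 1 \end{pmatrix}.
\]
Consequently, if $N$ contains any element with $a\neq 1$, then letting $c$ range over $\mathbb{F}_q$ yields every translation and $T\subseteq N$. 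The remaining possibility is $N\subseteq T$; here I would instead conjugate by the diagonal matrix $\begin{pmatrix} c & 0 \\ 0 & 1 \end{pmatrix}$, which carries the translation by $b$ to the translation by $cb$, so that $N$ corresponds to a nonzero additive subgroup of $\mathbb{F}_q$ stable under multiplication by all $c\in\mathbb{F}_q^*$. A single nonzero element then sweeps out all of $\mathbb{F}_q$, forcing $N=T$.

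With $T\subseteq N$ in hand, set $H=\pi(N)\le\mathbb{F}_q^*$. The inclusion $N\subseteq\pi^{-1}(H)=N_H$ holds by the definition of $H$. For the reverse inclusion, given $a\in H$ choose some $\begin{pmatrix} a & b' \\ 0 & 1 \end{pmatrix}\in N$; left-multiplying by the translations of $T\subseteq N$ adjusts the top-right entry to any prescribed value in $\mathbb{F}_q$, so every matrix with top-left entry $a$ belongs to $N$. Hence $N_H\subseteq N$ and $N=N_H$. Finally, each $N_H$ contains $T$ and is therefore non-trivial, so the correspondence lands exactly onto the non-trivial normal subgroups.

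The main obstacle is the case split in the surjectivity argument. The commutator identity disposes cleanly of normal subgroups containing an element outside $T$, but the alternative $N\subseteq T$ must be handled separately, and it genuinely uses that $(\mathbb{F}_q,+)$ has no proper nonzero $\mathbb{F}_q^*$-stable subgroup; this is precisely the point where the field structure, rather than just the abelian group structure of the translations, enters the proof.
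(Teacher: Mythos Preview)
Your argument is correct. The paper itself does not give a proof of this proposition; it is quoted from Conrad's expository note and used as a black box to compute $D(\Aff(\mathbb{F}_q))$ in the subsequent result. Your write-up supplies exactly the standard argument one finds in that reference: use the projection $\pi$ onto $\mathbb{F}_q^*$ with kernel the translation subgroup $T$, identify $N_H=\pi^{-1}(H)$ to get normality and injectivity for free, and then prove surjectivity by showing every non-trivial normal subgroup contains $T$. The commutator computation $[t,n]=\begin{pmatrix}1 & c(1-a)\\ 0 & 1\end{pmatrix}$ is right, as is the treatment of the residual case $N\subseteq T$ via conjugation by diagonals, which is indeed where the field structure (no proper nonzero $\mathbb{F}_q^*$-invariant additive subgroup) is genuinely needed. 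There is nothing to correct; if anything, you have filled in a proof the paper elected to outsource.
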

\begin{proposition}
\begin{enumerate}[(1)]
\item
There are no Leinster affine groups.
\item
The cyclic group of order $2$ is the only quasi-Leinster affine group.
\item
An affine group $\Aff(F_q)$ is almost Leinster if and only if $q$ is prime and $q-1$ is perfect.
\end{enumerate}
\end{proposition}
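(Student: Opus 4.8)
The plan is to compute $D(\Aff(\mathbb{F}_q))$ explicitly from Proposition~\ref{prop:affine} and then solve the three Diophantine conditions. First I would record that $|\Aff(\mathbb{F}_q)|=q(q-1)$, that the trivial subgroup contributes $1$, and that by Proposition~\ref{prop:affine} the non-trivial normal subgroups are exactly the $N_H$ for $H\le \mathbb{F}_q^*\cong C_{q-1}$, with $|N_H|=q\,|H|$. Summing over all subgroups $H$ of the cyclic group $C_{q-1}$ gives $\sum_{H\le C_{q-1}}|H|=D(C_{q-1})=D(q-1)$ by Example~\ref{sigma:cyclic}. Hence
\begin{equation}\label{eq:Daffine}
D(\Aff(\mathbb{F}_q))=1+q\,D(q-1).
\end{equation}
So $\delta(\Aff(\mathbb{F}_q))=\frac{1+q\,D(q-1)}{q(q-1)}=\frac{D(q-1)}{q-1}+\frac{1}{q(q-1)}=\delta(q-1)+\frac{1}{q(q-1)}$.

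For part (1), I would observe that $\delta(\Aff(\mathbb{F}_q))=2$ would force $q(q-1)\mid 1+q\,D(q-1)$, i.e. $q(q-1)\mid 1+q\,D(q-1)$; reducing modulo $q$ gives $q\mid 1$, impossible for $q\ge 2$ (equivalently, $q\mid D(\Aff(\mathbb{F}_q))-1$ so $q\nmid D(\Aff(\mathbb{F}_q))$, while $q\mid 2|G|$). For part (2), the quasi-Leinster condition $D(\Aff(\mathbb{F}_q))=2q(q-1)+1$ together with \eqref{eq:Daffine} gives $q\,D(q-1)=2q(q-1)$, hence $D(q-1)=2(q-1)$, i.e. $q-1$ is perfect; but then $\delta(\Aff(\mathbb{F}_q))=2+\frac{1}{q(q-1)}>2$ confirms abundance, and I must still check these are genuinely the groups claimed — the statement says only $C_2$ works, so I need to rule out $q>2$. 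Here $q-1$ perfect means $q-1\in\{6,28,496,\dots\}$, so $q\in\{7,29,497,\dots\}$, and $q$ must be a prime power; $q=7$ is prime, giving $\Aff(\mathbb{F}_7)$ — but wait, the statement asserts the \emph{cyclic group of order $2$}. I would resolve this by noting $\Aff(\mathbb{F}_2)\cong C_2$ (since $|\Aff(\mathbb{F}_2)|=2$), and checking that $q=2$ does satisfy the quasi-Leinster equation: $D(C_2)=1+2=3=2\cdot 2-1$, which is \emph{almost} Leinster, not quasi. This tension means the intended reading of part (2) must be examined carefully against \eqref{eq:Daffine}; the honest computation is that $\Aff(\mathbb{F}_q)$ is quasi-Leinster iff $q-1$ is perfect, and among prime powers $q$ with $q-1$ perfect one then invokes a separate argument (or the table in Example~\ref{perfect+1}) — I will need to reconcile the statement with this, most likely by the additional requirement that forces $q=2$ through a parity or size constraint I will pin down.

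For part (3), the almost-Leinster condition $D(\Aff(\mathbb{F}_q))=2q(q-1)-1$ combined with \eqref{eq:Daffine} yields $q\,D(q-1)=2q(q-1)-2$, so $q\mid 2$, forcing $q=2$ unless $q$ divides $2$; thus the equation has no solution for $q>2$ \emph{unless} one allows the general affine group over $\mathbb{F}_q$ for non-prime $q$ to have \emph{more} normal subgroups than Proposition~\ref{prop:affine} accounts for — and indeed for $q=p^k$ with $k>1$ the additive group $\mathbb{F}_q$ has nontrivial $\mathbb{F}_p$-subspaces, but these are not normal in $\Aff(\mathbb{F}_q)$ since the $\mathbb{F}_q^*$-action is transitive on $\mathbb{F}_q\setminus\{0\}$, so Proposition~\ref{prop:affine} is exact and the divisibility $q\mid 2$ genuinely forces $q=2$; hence for $q>2$ there are no almost-Leinster affine groups, and the "$q$ prime, $q-1$ perfect" clause in the statement must come from a different normalisation of $D$ (perhaps counting proper subgroups, excluding $G$ itself from $N(G)$) — I would re-derive \eqref{eq:Daffine} under that convention, namely $D'(\Aff(\mathbb{F}_q))=1+q\,D(q-1)-q(q-1)=1+q(D(q-1)-(q-1))$, and then the almost-Leinster condition $D'(G)=|G|-1=q(q-1)-1$ becomes $q(D(q-1)-(q-1))=q(q-1)$... which still needs $q\mid q(q-1)$, automatically true, reducing to $D(q-1)-(q-1)=q-1$, i.e. $D(q-1)=2(q-1)$ — so under this convention part (3)'s condition is exactly "$q-1$ perfect," and the primality of $q$ would be an extra input.

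The main obstacle, then, is not any hard number theory but pinning down which convention for $D$ and for "abundant/deficient" the authors intend, so that the three divisibility reductions above match the stated conclusions; once the convention is fixed, all three parts follow from \eqref{eq:Daffine} (or its primed analogue) by elementary modular arithmetic, the multiplicativity/formula $D(C_n)=D(n)$, and — for the primality clause — the observation that if $q=p^k$ then $\mathbb{F}_q^*$ being cyclic of order $q-1$ interacts with the table of Example~\ref{perfect+1} via Corollary~\ref{k=1}, which forces $k=1$ precisely when we require $q$ itself (not just $q-1$) to satisfy an auxiliary primality/perfection constraint. I expect the cleanest writeup to state \eqref{eq:Daffine}, immediately deduce $\delta(\Aff(\mathbb{F}_q))=\delta(q-1)+\tfrac{1}{q(q-1)}$, and then dispatch (1), (2), (3) by, respectively: $q\nmid 1$; solving $D(q-1)=2(q-1)$ and checking abundance plus the size constraint that isolates $q=2$; and solving the almost condition which via $q\mid 2$ (or, under the proper-subgroup convention, $D(q-1)=2(q-1)$ together with primality of $q$ via Corollary~\ref{k=1}) gives exactly the claimed characterisation.
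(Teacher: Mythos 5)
Your computation is exactly the paper's: it derives $D(\Aff(\mathbb{F}_q))=1+q\,D(q-1)$ from Proposition~\ref{prop:affine} and the cyclicity of $\mathbb{F}_q^*$, then reduces each of the three conditions modulo $q$, getting $q\mid 1$ (impossible), $q\mid 2$ (so $q=2$), and $D(q-1)=2(q-1)$ with primality of $q$ supplied by Corollary~\ref{k=1}. So the approach is the same and your arithmetic is correct.

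The ``tension'' you could not resolve is real, and you should not look for a hidden convention on $D$: the paper computes $D$ over all normal subgroups including $G$ itself, exactly as you first assumed, and your observation that proper additive subgroups of $\mathbb{F}_q$ are not normal (the $\mathbb{F}_q^*$-action being transitive on nonzero elements) correctly confirms that Proposition~\ref{prop:affine} already lists all normal subgroups. The actual explanation is that the proposition and its proof in the paper attach the label ``quasi'' to the equation $D(G)=2|G|-1$ and ``almost'' to $D(G)=2|G|+1$, which is the opposite of the paper's own Definitions (quasi-Leinster means $D(G)=2|G|+1$, almost Leinster means $D(G)=2|G|-1$). Under the stated definitions, your computation gives the correct version: $C_2$ (with $D(C_2)=3=2\cdot 2-1$, hence deficient) is the only \emph{almost}-Leinster affine group, and $\Aff(\mathbb{F}_q)$ is \emph{quasi}-Leinster if and only if $q$ is prime and $q-1$ is perfect. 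Your detour through a ``proper-subgroup'' normalisation of $D$ is unnecessary and should be cut; everything else stands.
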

\begin{proof}
Since $\mathbb{F}_q^*$ is cyclic, it follows from Proposition \ref{prop:affine} that: 
$$N(\Aff(\mathbb{F}_q))=\{1\}\cup \{N_H|H\leq \mathbb{F}_q^*\}.$$Then $$D(\Aff(\mathbb{F}_q))=1+\sum_{H\leq \mathbb{F}_q^*}|N_H|=1+q\sum_{H\leq \mathbb{F}_q^*}|H|=1+qD(q-1).$$We obtain that:
\begin{enumerate}[(1)]
\item $\Aff(\mathbb{F}_q)$ is Leinster if and only if $1+qD(q-1)=2q(q-1)$. This yields $q|1$, which is absurd.
\item $\Aff(\mathbb{F}_q)$ is quasi-Leinster if and only if $1+qD(q-1)=2q(q-1)-1$. It follows that $ q|2$, thus $q=2$, i.e. $\Aff(\mathbb{F}_q)\cong C_2$.
\item $\Aff(\mathbb{F}_q)$ is almost Leinster if and only if $1+qD(q-1)=2q(q-1)+1$. This implies $D(q-1)=2(q-1)$, i.e. $q-1$ is perfect. According to Corollary \ref{k=1}, $q=p$.
\end{enumerate}
\end{proof}
\begin{example}
Using Example \ref{perfect+1}, we get $4$ examples of almost Leinster affine groups: $\Aff(\mathbb{F}_7)$, $\Aff(\mathbb{F}_{29})$, $\Aff(\mathbb{F}_{33550337})$, and $\Aff(\mathbb{F}_{137438691329})$.
\end{example}
\section{Dihedral groups}

\begin{definition}[\cite{WR}, Page 188]
Given $n\in\mathbb{N}^*\setminus\{1\}$, the corresponding \textit{dihedral group} is given by 
$$D_{2n}=\langle r, s | r^n=s^2=e,srs^{-1}=r^{-1}\rangle.$$
\end{definition}

The normal subgroups of $D_{2n}$ are described in the following result.

\begin{lemma}[\cite{Conrad2}]\label{NDn}
The normal subgroups of $D_{2n}$ are given by:
\begin{numcases}{N(D_{2n})=}\{D_{2n}\}\cup\{\langle r^d\rangle: d|n\}, & n\text{ odd} \label{odd}\\
\{D_{2n}\}\cup\{\langle r^d\rangle: d|n\}\cup \{\langle r^2, s\rangle, \langle r^2,rs\rangle\}, & n\text{ even} \label{even}.
\end{numcases}
In particular, there is at most one normal subgroup per index in $D_{2n}$ except for three normal subgroups $\langle r\rangle$, $\langle r^2, s\rangle$, $\langle r^2,rs\rangle$ of index $2$ when $n$ is even.
\end{lemma}

We are now able to study when $D_{2n}$ is a quasi-/almost/Leinster group. 

\begin{proposition}\label{dihed}
A dihedral group $D_{2n}$ is quasi-/almost/Leinster if and only if $n$ is odd and quasi-/almost/Leinster.
\end{proposition}

\begin{proof}
If we assume that $D_{2n}$ is almost/quasi-/Leinster, we get 
\begin{equation}\label{dihl}
D(D_{2n})=4n+a, a\in\{-1,0,1\}.
\end{equation}
We compute the value for $D(D_{2n})$, distinguishing two cases based on the parity of $n$:
\begin{description}
\item[$n$ odd.]
Using \eqref{odd} from Lemma \ref{NDn}, together with \eqref{dihl}, we get 
\begin{equation}\label{d2nodd}
2\cdot 2n+a=2n+D(n), a\in\{-1,0,1\},    
\end{equation}
which is equivalent to $D(n)=2n\pm a$, i.e. $n$ being almost/quasi-/perfect. 
\item[$n$ even.]
Again, using \eqref{dihl} and Lemma \ref{NDn}, we get: 
$4n+D(n)=4n+ a, a\in\{-1,0,1\}$, which has no solution.
\end{description}
\end{proof}
\begin{remark}
Proposition \ref{dihed} recovers the result for Leinster dihedral groups from \cite{Leinster}. 
\end{remark}
\section{Dicyclic groups}
\begin{definition}[\cite{art}, Page 128]
Given $n\in\mathbb{N}^*\setminus\{1\}$, the corresponding \textit{dicyclic group} is given by:
$$\Dic_{4n}=\langle a,x|a^{2n}=1, x^2=a^n, ax=xa^{-1}\rangle.$$
\end{definition}
The normal subgroups of the dicyclic groups can be described in a similar way to those of the dihedral groups.
\begin{lemma}[\cite{art}, Lemma 3.3]\label{ndic}
The normal subgroups of $\Dic_{4n}$ are given by:
\begin{numcases}
{N(\Dic_{4n})=}
{\Dic_{4n}}\cup\{\langle a^d\rangle: d|2n\}, & n\text{ odd} \label{dodd}\\
{\Dic_{4n}}\cup\{\langle a^d\rangle: d|2n\}\cup \{\langle a^2, ax\rangle, \langle a^2, a^2x\rangle, & n\text{ even} \label{deven}.
\end{numcases}
\end{lemma}

This allows us to prove the following proposition.

\begin{proposition}
A dicyclic group $\Dic_{4n}$ is quasi-/almost/Leinster if and only if $n$ is odd, and $2n$ quasi-/almost/Leinster.
\end{proposition}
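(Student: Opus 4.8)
The plan is to mirror the structure of the generalised dihedral case. First I would invoke Proposition \ref{prop:dicyclic}: since every subgroup of $A$ is normal in $\Dic_{4n}(A)$, we get $L(A)\subseteq N(\Dic_{4n}(A))$, and in particular
\[
D(\Dic_{4n}(A))\geq D(A)+\text{(contribution of the non-abelian normal subgroups and the whole group)}.
\]
So the first step is to pin down $N(\Dic_{4n}(A))$ exactly, or at least tightly enough. Besides the subgroups of $A$, the normal subgroups are those containing $x$-type elements; the key structural fact is that $A$ has index $2$ in $\Dic_{4n}(A)$, so any normal subgroup $N$ not contained in $A$ satisfies $NA=\Dic_{4n}(A)$, whence $|N|\geq 2n$ and $N\cap A$ is a subgroup of $A$ of index $2$ in $N$ that is normalised by $x$ (i.e. $x$-inversion-invariant). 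This gives a lower bound $\sum_{N\leq \Dic_{4n}(A)}|N|\geq D(A)+2n+4n$ coming from $A$ itself (order $2n$) and the whole group (order $4n$), and more if there are further non-abelian normal subgroups.

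The second step is the reduction to $A$ cyclic. If $A$ is not cyclic of order $2n$, then by Example \ref{sigma:cyclic} and the fact that every divisor of $|A|$ need not be realised we instead use: a non-cyclic abelian group $A$ satisfies $D(A)>2|A|+1=4n+1$ (this is the abelian analogue already used in the dihedral proof, following from the computation with an elementary abelian section, as in the proof characterising quasi-Leinster nilpotent groups). Combined with the contribution $\geq 4n$ from the whole group and $\geq 2n$ from $A$ itself — wait, more carefully, $D(A)$ already counts $|A|=2n$, so we have $D(\Dic_{4n}(A))\geq D(A)+4n>4n+1+4n>2|\Dic_{4n}(A)|+1=8n+1$, so $\Dic_{4n}(A)$ is strictly abundant by more than one and equation $D=2|\Dic|\pm a$ for $a\in\{0,1\}$ has no solution. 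Hence $A$ must be cyclic of order $2n$, so $A=C_{2n}$ and the group is $\Dic_{4n}$.

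The third step is the explicit computation for $\Dic_{4n}=\langle a,x\mid a^{2n}=1,x^2=a^n,ax=xa^{-1}\rangle$. Here $\langle a\rangle\cong C_{2n}$ contributes $D(2n)$ via its divisor subgroups (all normal, being subgroups of the abelian $A$). The normal subgroups not inside $\langle a\rangle$ must contain $\langle a^2\rangle$ (the image mod the obvious quotient forces this, exactly as $A^2\subseteq A_1$ appeared dihedrally); I expect these to be $\langle a^2,x\rangle$ and, when $n$ is even, also $\langle a^2,ax\rangle$, plus $\Dic_{4n}$ itself. For $n$ odd this should give $D(\Dic_{4n})=D(2n)+2n$ coming from $\langle a\rangle$ of order $2n$ plus the whole group of order $4n$ — let me recount: $D(2n)$ already includes the term $2n$; the extra normal subgroup $\langle a^2,x\rangle$ has order $2n$; the whole group has order $4n$; so $D(\Dic_{4n})=D(2n)+2n+4n-2n=D(2n)+4n$? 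I need to be careful, but the upshot I am aiming for is $D(\Dic_{4n})=2\cdot\tfrac{|\Dic_{4n}|}{2}+(\text{stuff})$ reducing, for $n$ odd, to the condition $D(2n)=2\cdot 2n\pm a$, i.e. $2n$ is quasi/almost/-Leinster (equivalently quasi/almost/-perfect), and for $n$ even to an equation with no solution because the count of extra normal subgroups jumps.

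The main obstacle I anticipate is the precise enumeration of the normal subgroups of $\Dic_{4n}$ that are not contained in $\langle a\rangle$, and keeping the parity case-split $n$ even versus $n$ odd straight — in particular showing that for $n$ even the extra contribution is large enough to break all three equations, while for $n$ odd it is exactly right. This is a finite bookkeeping problem analogous to the dihedral computation; I would handle it by writing $\Dic_{4n}/\langle a^2\rangle$ explicitly (it has order $4$ or $8$ depending on parity, wait — order $4n/ (n) = 4$ when... ) and reading off which of its subgroups pull back to normal subgroups, then summing orders. Once the two values of $D(\Dic_{4n})$ are in hand, the equivalence with $2n$ being quasi/almost/-Leinster is immediate from Example \ref{sigma:cyclic}.
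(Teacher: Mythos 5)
Your overall strategy coincides with the paper's: bound $D(\Dic_{4n}(A))$ below by $D(A)+4n$ via Proposition \ref{prop:dicyclic}, note that a non-cyclic abelian $A$ satisfies $D(A)>2|A|+1=4n+1$ so that $D(\Dic_{4n}(A))>8n+1$ and none of the three equations $D=8n\pm a$, $a\in\{0,1\}$, can hold, and then compute $D(\Dic_{4n})$ explicitly for $A=C_{2n}$ with a parity split on $n$. The reduction to $A$ cyclic is correct and is exactly the argument in the paper.

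The gap is in the enumeration you yourself flag as the main obstacle, and as written it contains a genuine error: for $n$ odd, $\langle a^2,x\rangle$ is \emph{not} a proper normal subgroup of order $2n$. Since $x^2=a^n$ and $\gcd(n,2)=1$, the subgroup $\langle a^2,x\rangle$ contains $\langle a^2,a^n\rangle=\langle a\rangle$ and hence equals all of $\Dic_{4n}$. The correct bookkeeping is: any normal subgroup $N\not\leq\langle a\rangle$ contains some $xa^i$, hence contains $(xa^i)^2=a^n$ and also $a^2$ (conjugating $xa^i$ by $a$ gives $xa^{i+2}$, and $(xa^i)^{-1}(xa^{i+2})=a^2$). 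For $n$ odd this forces $N\supseteq\langle a^2,a^n\rangle=\langle a\rangle$, so $N=\Dic_{4n}$ and $D(\Dic_{4n})=D(2n)+4n$; for $n$ even, $a^n\in\langle a^2\rangle$ and one gets exactly two extra proper normal subgroups, $\langle a^2,x\rangle$ and $\langle a^2,ax\rangle$, each of order $2n$, so $D(\Dic_{4n})=D(2n)+8n$ and all three equations fail. Your tentative final formulas ($D(2n)+4n$ in the odd case, ``no solution'' in the even case) agree with the paper's, but the enumeration you sketch would, if carried out literally, give $D(2n)+6n$ in the odd case; the missing ingredient is the observation that $a^n\in\langle a^2\rangle$ exactly when $n$ is even, which is what drives the parity split.
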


\begin{proof}
If $\Dic_{4n}$ is almost/quasi-/Leinster, we get 
\begin{equation}\label{dicn}
D(\Dic_{4n})=8n + a, a\in\{-1,0,1\}
\end{equation}
Using lemma \ref{ndic}, we compute $D(\Dic_{4n})$ based on the parity of $n$.
\begin{description}
\item[$n$ even.]
In this case, it follows from \eqref{deven}, that $D(\Dic_{4n})=D(2n)+8n$. Together with \eqref{dicn}, we get $D(2n)+8n=8n + a, a\in\{-1,0,1\}$, which has no solution.
\item[$n$ odd.]
In this case $D(\Dic_{4n})=D(2n)+4n$. It follows that $D(2n)+4n=8n+a, a\in\{-1,0,1\}$, i.e.: 
\begin{equation}\label{eq:a}
D(2n)=4n + a, a\in\{-1,0,1\}.
\end{equation}
\begin{enumerate}[(1)]
\item
If $D(2n)=4n-1$, then $2n$ is almost perfect.
\item
If $D(2n)=4n$, then $2n$ is perfect.
\item
If $D(2n)=4n+1$, then $2n$ is quasi-perfect.
\end{enumerate}
\end{description}
\end{proof}
\begin{remark}
In the context of the previous proof, when $n$ is odd, equation \eqref{eq:a} 
has no known solutions for $a\in\{-1,1\}$. \\
Indeed, there are no known almost perfect numbers of the form $4k+2$, $k\in\mathbb{N}^*$, so there are no known solutions for $a=-1$.\\
Similarly, there are no known solution for $a=1$, since there are no known quasi-perfect numbers.\\
For $a=0$, or equivalently, $2n$ perfect, there is just one solution, $n=3$, which yields the unique Leinster group $\Dic_{12}$.
\end{remark}
\begin{remark}
Sometimes, the dicyclic groups are called generalized quaternion groups, see \cite[Page 252]{WR}.\\
Our results above recover the result for Leinster generalized quaternion groups from \cite[Proposition 2.1.]{Baishya1}.
\end{remark}

\section{Conclusions}
We introduced quasi-/almost Leinster groups, which are analogues to quasi-/almost perfect numbers. \\
We proved that the only nilpotent almost and quasi-Leinster groups are the cyclic groups of almost and quasi-perfect order.\\
In addition, we studied a few well-known classes of non-nilpotent groups (ZM-groups, affine groups, dihedral groups and dicyclic groups). \\
We found examples of almost and quasi-Leinster groups for the first two. 
We proved that the existence of almost/quasi-/Leinster dihedral groups $D_{2n}$ groups is equivalent to $n$ being odd, and $2n$ being almost/quasi-/perfect.\\
Similarly, for dicyclic groups, we proved that the existence of an almost/quasi-/Leinster group $\Dic_{4n}$ is equivalent to $n$ being odd and $2n$ being almost/quasi-/perfect.
\begin{acknowledgments}
We thank the anonymous referee for the valuable comments which improved the first versions of the paper.
\end{acknowledgments}

 \begin{flushleft}
Iulia - C\u at\u alina PLE\c SCA\\
Faculty of Mathematics,\\
"Alexandru Ioan Cuza" University of Ia\c si,\\
Carol I Boulevard, Nr. 11, 700506 Ia\c si, Rom\^ania. \\
Email: iulia.plesca@uaic.ro, dankemath@yahoo.com\\
ORCID: 0000-0001-7140-844X

Marius T\u ARN\u AUCEANU\\
Faculty of Mathematics,\\
"Alexandru Ioan Cuza" University of Ia\c si,\\
Carol I Boulevard, Nr. 11, 700506 Ia\c si, Rom\^ania. \\
Email: tarnauc@uaic.ro\\
ORCID: 0000-0003-0368-6821
\end{flushleft}
\end{document}